\newcommand{\kom}[1]{}
\renewcommand{\kom}[1]{{\bf [#1]}}
 \def\1{\raisebox{2pt}{\rm{$\chi$}}}
\newtheorem{theorem}{Theorem}[section]
\newtheorem{corollary}[theorem]{Corollary}
\newtheorem{lemma}[theorem]{Lemma}
\newtheorem{remark}[theorem]{Remark}
\newcommand{\R}{{\mathbb R}}
\newcommand{\N}{{\mathbb N}}
\newcommand{\Z}{{\mathbb Z}}
 \def\1{\raisebox{2pt}{\rm{$\chi$}}}
\def\vint_#1{\mathchoice%
          {\mathop{\kern 0.2em\vrule width 0.6em height 0.69678ex depth -0.58065ex
                  \kern -0.8em \intop}\nolimits_{\kern -0.4em#1}}%
          {\mathop{\kern 0.1em\vrule width 0.5em height 0.69678ex depth -0.60387ex
                  \kern -0.6em \intop}\nolimits_{#1}}%
          {\mathop{\kern 0.1em\vrule width 0.5em height 0.69678ex
              depth -0.60387ex
                  \kern -0.6em \intop}\nolimits_{#1}}%
          {\mathop{\kern 0.1em\vrule width 0.5em height 0.69678ex depth -0.60387ex
                  \kern -0.6em \intop}\nolimits_{#1}}}
\def\vintslides_#1{\mathchoice%
          {\mathop{\kern 0.1em\vrule width 0.5em height 0.697ex depth -0.581ex
                  \kern -0.6em \intop}\nolimits_{\kern -0.4em#1}}%
          {\mathop{\kern 0.1em\vrule width 0.3em height 0.697ex depth -0.604ex
                  \kern -0.4em \intop}\nolimits_{#1}}%
          {\mathop{\kern 0.1em\vrule width 0.3em height 0.697ex depth -0.604ex
                  \kern -0.4em \intop}\nolimits_{#1}}%
          {\mathop{\kern 0.1em\vrule width 0.3em height 0.697ex depth -0.604ex
                  \kern -0.4em \intop}\nolimits_{#1}}}
\newcommand{\aveint}[2]{\mathchoice%
          {\mathop{\kern 0.2em\vrule width 0.6em height 0.69678ex depth -0.58065ex
                  \kern -0.8em \intop}\nolimits_{\kern -0.45em#1}^{#2}}%
          {\mathop{\kern 0.1em\vrule width 0.5em height 0.69678ex depth -0.60387ex
                  \kern -0.6em \intop}\nolimits_{#1}^{#2}}%
          {\mathop{\kern 0.1em\vrule width 0.5em height 0.69678ex depth -0.60387ex
                  \kern -0.6em \intop}\nolimits_{#1}^{#2}}%
          {\mathop{\kern 0.1em\vrule width 0.5em height 0.69678ex depth -0.60387ex
                  \kern -0.6em \intop}\nolimits_{#1}^{#2}}}
\newcommand{\T}{\mathbb T}
\begin{document}

\address{Theresa C. Anderson, Department of Mathematics, Purdue University, 150 N. University St., W. Lafayette, IN
47907, USA.}
\email{tcanderson@purdue.edu}

\address{Jos\'e Madrid: Department of  Mathematics,  University  of  California,  Los  Angeles,  Portola Plaza 520, Los  Angeles, CA, 90095, USA.}
\email{jmadrid@math.ucla.edu}

\title{New bounds for discrete lacunary spherical averages}
\author{Theresa C. Anderson and Jos\'e Madrid}
\date{April 2020}
\begin{abstract}
    We show that the discrete lacunary spherical maximal function is bounded on $l^p(\mathbb{Z}^d)$ for all $p >\frac{d+1}{d-1}$.  Our range is new in dimension 4, where it appears that little was previously known for general lacunary radii.  Our technique follows that of Kesler-Lacey-Mena, using the Kloosterman refinement to improve the estimates in several places, which leads to an overall improvement in dimension 4. 
\end{abstract}

\maketitle
\section{Introduction and main results}
The full range of $l^p$ bounds for lacunary spherical maximal functions are still unknown in the discrete setting.  In this article we use refined number theoretic analysis which gives new bounds in dimension $4$ with no additional restrictions on the lacunary sequence.

We begin with a few definitions of our objects of study.
The discrete spherical averages are defined as
\[
A_{\lambda} f(n) = \frac{1}{N(\lambda)}\sum_{|m|^2 = \lambda}f(n-m)
\]
where  $m, n\in \Z^d$, ($|m|^2$ is shorthand for $m_1^2+ \cdots m_d^2$) and \[N(\lambda) = \#\{ m \in \Z^d: |m|^2=\lambda\}\]
is the number of lattice points on the sphere of radius $\lambda^{1/2}$ in $\R^{d}$, which is $\lambda^{d/2-1}$ by the Hardy-Littlewood asymptotic for all $d \geq 5$.  Moreover, by work of Kloosterman \cite{Kloos} this also holds for $d = 4$ as long as $\lambda\in \N\setminus 4\N$.

This can be thought of as a convolution operator with the measure
\[ \sigma_\lambda(n) := \frac{1}{N(\lambda)}{\bf 1}_{\{{\mathbf n} \in \Z^d: |n|^2=\lambda\}}(n). \]

We call a set of radii \emph{lacunary} if $\lambda_{j+1} > 2\lambda_j$ for all $j$ and define the discrete lacunary spherical maximal operator by \[
M_{lac}f = \sup_j |A_{\lambda_j}(f)| = \sup_j |f * \sigma_{\lambda_j}|.
\] 

This object was first studied in the continuous case when C. Calder\'on proved $L^p(\R^d)$ boundedness for all $p>1$ \cite{C}. 
The fine analysis of the $L^1$ endpoint is still an open question, see \cite{CK} for the current best result and some history on this subject.  In the discrete world, even less is known.  Originally, it was thought that this operator was also bounded on $l^p(\Z^d)$ for all $p >1$.  However, a counterexample of Zienkiewicz (see \cite{CH} for a description and an extension) shows that actually this operator is unbounded for all $1<p\leq \frac{d}{d-1}$, $d \geq 5$.  Therefore finding the exact range of boundedness became a more interesting question.  Kevin Hughes showed bounds for a restricted sequence of lacunary radii in \cite{Hughes_lacunary} for all $d \geq 4$.  Recently Kesler-Lacey-Mena showed $l^p$ bounds for any lacunary sequence for all $p> \frac{d-2}{d-3}$, $d \geq 5$ \cite{KLM} (see also the related work \cite{CH}).  Here we show $l^p$ bounds for all $p> \frac{d+1}{d-1}$.  Though this does not improve Kesler-Lacey-Mena's range for dimensions six and higher, our techniques give the first bounds for general lacunary sequences in $d=4$ and also provide $l^p(\Z^d)$ estimates for the error term of the multiplier decomposition in all dimensions. 

We will take advantage of a decomposition of the Fourier multiplier involving Kloosterman sums from analytic number theory.  Define $\Psi(\xi)$ be a smooth bump function supported on $\max_j |\xi_j| \leq 1/4$ and equal to 1 on $\max_j |\xi_j| \leq 1/8$.  Our multiplier has the following decomposition for $d \geq 5$ or $d = 4, \lambda \in \N\setminus 4\N$, essentially due to Magyar \cite{Magyar} (based on work of Magyar-Stein-Wainger \cite{MSW}).
\begin{theorem}
\label{MSW decomposition}
We have that 
\[\widehat{A_\lambda}(\xi) = \widehat{M_\lambda}(\xi)+\widehat{E_\lambda}(\xi) = \sum_{q = 1}^{\lambda^{1/2}}\sum_{l\in \Z^d_q}K(\lambda, q, l)\Psi(q\xi - l)\widehat{d\sigma}_\lambda(\xi - \frac{l}{q})+ \widehat{E_\lambda}(\xi)\]
where $K(\lambda, q ,l)$ is the exponential sum \[K(\lambda, q, l) = \frac{1}{q^d}\sum_{a \in U_q}\sum_{x\in \Z^d_q}e(\frac{-\lambda a+a|x|^2+lx}{q}),\] $\widehat{d\sigma}_\lambda$ is the Fourier transform of the continuous surface measure on the sphere of radius $\lambda^{1/2}$ and the error terms satisfy the decay property
\begin{equation}
    \label{error decay lacunary}
    \|E_\lambda f\|_{l^2(\Z^d)} \lesssim \lambda^{\frac{3-d}{4}+\varepsilon}\|f\|_{l^2(\Z^d)}.
\end{equation}
\end{theorem}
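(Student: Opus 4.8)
The plan is to follow the Magyar--Stein--Wainger circle method adapted to the discrete sphere, tracking the dependence on $\lambda$ carefully enough to extract the exponent $\frac{3-d}{4}+\varepsilon$. First I would write $\widehat{A_\lambda}(\xi)$ as a periodized sum over the Farey dissection: decompose the torus $\mathbb{T}^d$ into major arcs around rationals $l/q$ with $q \leq \lambda^{1/2}$ and a minor-arc remainder. On each major arc, the Gauss-sum factorization of the quadratic exponential sum produces the coefficient $K(\lambda,q,l)$ times the oscillatory integral that, after the stationary phase / van der Corput analysis of Magyar, is identified with $\widehat{d\sigma}_\lambda(\xi - l/q)$ up to a controlled error. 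The localization $\Psi(q\xi - l)$ restricts to the portion of the arc where this approximation is valid. Collecting the main contributions gives $\widehat{M_\lambda}(\xi)$, and everything left over—both the minor-arc piece and the major-arc approximation errors—is defined to be $\widehat{E_\lambda}(\xi)$. This part is essentially bookkeeping from \cite{Magyar} and \cite{MSW}, so I would cite those sources for the structural identity and concentrate the argument on the $l^2$ bound \eqref{error decay lacunary}.

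For the error estimate, by Plancherel it suffices to bound $\|\widehat{E_\lambda}\|_{L^\infty(\mathbb{T}^d)}$. I would split $E_\lambda$ into the minor-arc part and the major-arc error part. For the minor-arc part, the key input is the Weyl-type estimate for the exponential sum defining $\widehat{A_\lambda}$: on the minor arcs one has square-root-type cancellation in the Gauss sums, and combined with the normalization $1/N(\lambda) \sim \lambda^{1-d/2}$ (valid for $d \geq 5$, and for $d=4$, $\lambda \notin 4\mathbb{N}$, by Kloosterman \cite{Kloos}) this yields a bound of the desired order $\lambda^{(3-d)/4+\varepsilon}$. For the major-arc error, the difference between the true oscillatory integral and $\Psi(q\xi-l)\widehat{d\sigma}_\lambda(\xi-l/q)$ is controlled by the decay of $\widehat{d\sigma}_\lambda$ together with the estimate $|K(\lambda,q,l)| \lesssim q^{-(d-1)/2+\varepsilon}$ on the Kloosterman-type coefficients; summing the tail in $q$ from a suitable threshold up to $\lambda^{1/2}$ again produces the claimed power of $\lambda$. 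The arithmetic nontriviality—namely that $N(\lambda)$ genuinely has the Hardy--Littlewood size, which is what converts raw exponential-sum bounds into the sharp $\lambda$-exponent—is exactly where the $d=4$ restriction $\lambda \notin 4\mathbb{N}$ and Kloosterman's theorem enter.

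The main obstacle is getting the exponent right: one must interleave the choice of the major/minor arc threshold (as a power of $\lambda$), the Weyl bound on the minor arcs, and the summation of the arithmetic coefficients $K(\lambda,q,l)$ so that all three contributions balance at $\lambda^{(3-d)/4}$ rather than something weaker. A secondary subtlety is that the continuous measure $d\sigma_\lambda$ lives on a sphere of radius $\lambda^{1/2}$, so its Fourier transform carries $\lambda$-dependent decay and amplitude factors that must be normalized consistently with the $1/N(\lambda)$ in $A_\lambda$; keeping these normalizations aligned throughout is where sign and exponent errors are easiest to make. Once the $L^\infty$ multiplier bound on each piece is established at the level $\lambda^{(3-d)/4+\varepsilon}$, Plancherel closes the argument immediately.
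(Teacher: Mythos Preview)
Your proposal is correct and matches the paper's approach: reduce the $l^2$ bound to an $L^\infty$ multiplier bound via Plancherel, and obtain that $L^\infty$ bound from the Kloosterman-refined circle method of Magyar. The paper does not reprove the decomposition or the $L^\infty$ estimate but simply quotes Magyar's bound $\|\widehat{E_\lambda}\|_{L^\infty(\mathbb{T}^d)} \lesssim \lambda^{(3-d)/4+\varepsilon}$ and then applies Plancherel in one line, so your sketch of the underlying major/minor-arc analysis is more detailed than what the paper itself supplies.
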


The main idea from this theorem is that our multiplier splits into a main term involving both arithmetic behaviour (from Kloosterman sums) and analytic behaviour (continuous Fourier transform of spherical measure) and an error term, arising from the circle method decomposition.  For more information see \cite{Magyar}.  Note that we abuse notation by relabeling our multiplier $\sigma = A$.  This is for notational flexibility as well as to avoid confusion in using both $\sigma$ to represent the continuous and discrete spherical surface measures.  We also emphasize the different normalization used here and in \cite{KLM}.

We will prove the following main theorem:
\begin{theorem}\label{main theorem}
Let $d \geq 5$ or $d=4$ and $\lambda \in \N\setminus 4\N$.  Then $M_{lac}:l^p(\Z^d)\to l^p(\Z^d)$ for all $p > \frac{d+1}{d-1}$.
\end{theorem}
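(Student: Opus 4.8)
The plan is to exploit the decomposition from Theorem \ref{MSW decomposition}, handling the error term $E_\lambda$ and the main term $M_\lambda$ separately and then interpolating. For the error term, the decay in \eqref{error decay lacunary} gives $\|E_{\lambda_j} f\|_{l^2} \lesssim \lambda_j^{(3-d)/4 + \varepsilon}\|f\|_{l^2}$; squaring and summing over the lacunary sequence $\{\lambda_j\}$ (where the exponent $(3-d)/4 < 0$ for $d \geq 4$ guarantees summability of a geometric series, since $\lambda_{j+1} > 2\lambda_j$), we obtain $\|\sup_j |E_{\lambda_j} f|\|_{l^2} \leq \big(\sum_j \|E_{\lambda_j}f\|_{l^2}^2\big)^{1/2} \lesssim \|f\|_{l^2}$. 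A trivial $l^\infty \to l^\infty$ bound (each $A_\lambda$ and each localized piece $M_\lambda$ is an average, hence an $l^\infty$ contraction, so $E_\lambda = A_\lambda - M_\lambda$ is bounded on $l^\infty$ uniformly, and the sup is harmless on $l^\infty$ once one has a fixed-scale bound — more carefully one takes the $l^p$ bound for a single $E_\lambda$ from Magyar--Stein--Wainger-type estimates and sums) then interpolates with the $l^2$ bound to give $\sup_j|E_{\lambda_j}f|$ bounded on $l^p$ for all $p$ in a range containing $(\frac{d+1}{d-1},\infty)$; in fact the square-function estimate already forces $p=2$, and combined with $p=\infty$ one reaches all $p \geq 2$, while a separate argument (below, via the main term machinery applied to the full operator, or a direct $l^p$ bound for fixed scale plus summation using the spectral gap) extends past $2$ downward.

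The heart of the matter is the main term $M_{lac}^{main}f = \sup_j |M_{\lambda_j} f|$. Here I would follow the Kesler--Lacey--Mena strategy: further decompose $\widehat{M_\lambda}$ dyadically in the denominator $q$, writing $M_\lambda = \sum_{s \geq 0} M_\lambda^s$ where $M_\lambda^s$ collects the terms with $q \sim 2^s$. For each fixed $s$, the operator $\sup_j |M_{\lambda_j}^s f|$ should be bounded on $l^2$ with a bound that decays geometrically in $s$ — this uses the key arithmetic input, namely the bound on the Kloosterman-type sums $K(\lambda,q,l)$. The Kloosterman refinement (Weil-type square-root cancellation in the exponential sum) improves the naive Gauss-sum bound $|K(\lambda,q,l)| \lesssim q^{-d/2}$ (times $\gcd$ factors) and is precisely what gains us dimension $4$ and the improved error exponent. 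One also needs an $l^\infty$ (or $l^p$ for some $p$ near $1$) bound for $\sup_j|M_{\lambda_j}^s f|$, growing at most polynomially in $2^s$; interpolating the geometrically-decaying $l^2$ bound against this gives $l^p$ boundedness of $\sup_j|M_{\lambda_j}^s f|$ with a bound still summable in $s$, for $p > \frac{d+1}{d-1}$. Summing in $s$ finishes the main term.

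The $l^2$ bound for fixed $s$ itself splits: one separates the ``low frequency'' part near each rational point $l/q$, where $\widehat{d\sigma}_\lambda(\xi - l/q)$ is essentially constant and one is reduced to a discrete maximal average over a sublattice (handled by transference to the continuous lacunary maximal function, which is $L^2$-bounded by C. Calder\'on's theorem), from the ``high frequency'' part where the decay $|\widehat{d\sigma}_\lambda(\eta)| \lesssim (\lambda^{1/2}|\eta|)^{-(d-1)/2}$ of the continuous spherical measure gives extra smallness that, combined with the lacunarity $\lambda_{j+1} > 2\lambda_j$ and a square-function/almost-orthogonality argument across scales $j$, yields the needed decay in $s$. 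I expect the main obstacle to be bookkeeping the interaction between the three parameters — the scale $j$ of the lacunary sequence, the dyadic block $s$ of the denominator $q$, and a dyadic frequency decomposition of $\widehat{d\sigma}_\lambda$ — in such a way that the arithmetic saving from the Kloosterman bound on $K(\lambda,q,l)$ and the analytic saving from the spherical-measure decay are both captured sharply enough to push the exponent down to $\frac{d+1}{d-1}$ and to reach $d=4$; in particular verifying that the Kloosterman-improved bounds hold uniformly for the $d=4$, $\lambda \in \N \setminus 4\N$ case (where $N(\lambda) \sim \lambda^{d/2-1}$ still holds by Kloosterman's work) requires care.
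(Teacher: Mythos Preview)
Your outline captures the overall shape of the argument---decompose via Theorem \ref{MSW decomposition}, exploit the Kloosterman-refined $l^2$ decay, interpolate against a bound near $l^1$, and split the main term into low/high frequency---but there are two concrete gaps.

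First, your interpolation for the error term goes the wrong way. Interpolating the $l^2$ bound \eqref{error decay lacunary} against an $l^\infty$ bound only reaches $p\ge 2$, and you acknowledge this yourself without supplying the fix. The paper instead interpolates the $l^2$ bound against the \emph{trivial $l^1$ bound} (each $A_\lambda$ is an average of point masses, so $\|A_\lambda f\|_{l^1}\le \|f\|_{l^1}$, and one controls $\sup_{\Lambda\le\lambda<2\Lambda}|E_\lambda|$ in $l^1$ using the triangle inequality and the finitely many $\lambda_j$ in each dyadic block). This is what produces the threshold $\frac{d+1}{d-1}$.

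Second, and more seriously, you gloss over the hardest step: the $l^p$-near-$1$ bound for the main term. You write that one ``also needs an $l^\infty$ (or $l^p$ for some $p$ near $1$) bound for $\sup_j|M_{\lambda_j}^s f|$, growing at most polynomially in $2^s$'', but you never say where it comes from. Your low-frequency discussion invokes transference to the continuous lacunary maximal function, but that yields an $l^2$ (or $l^p$, $p>1$) bound with no explicit control on the $q$-dependence; it does not by itself give the polynomial-in-$q$ growth at $p=1+\varepsilon$ that the interpolation needs. The paper (following Kesler--Lacey--Mena) obtains this via a kernel decomposition and a Ramanujan-sum estimate, and it is precisely this step that controls the $l^{1+\varepsilon}$ norm of the low-frequency piece $M_{1,2}$ with growth $\alpha^2$. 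Without identifying that ingredient, your interpolation has nothing on the $p<2$ side for the main term.

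Organizationally, the paper does not run a dyadic-in-$q$ sum as you propose; instead it introduces a stopping time $\tau$, a threshold parameter $\alpha$, and splits $A_\tau f\le M_1 f+M_2 f$ with $\|M_1 f\|_{l^{1+\varepsilon}}\lesssim \alpha^2\|f\|_{l^{1+\varepsilon}}$ and $\|M_2 f\|_{l^2}\lesssim \alpha^{\varepsilon-\frac{d-3}{2}}\|f\|_{l^2}$, then optimizes $\alpha$ to obtain a restricted weak-type $(\frac{d+1}{d-1},\frac{d+1}{d-1})$ bound. Your dyadic scheme could in principle be reorganized into this, but only once the missing $l^{1+\varepsilon}$ input above is supplied.
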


Along the way we can also establish bounds for the error term in the approximation formula, which takes less significantly less work -- the argument only uses an $l^2$ bound coming from the Kloosterman refinement and the trivial $l^1$ bound to interpolate.  This result was first shown in \cite{Hughes_lacunary} (and extended to even show more there). 

\begin{corollary}
\label{main corollary}
The error term $E_{\lambda}$ in the Decomposition Theorem \ref{MSW decomposition}, treated as a convolution operator, is bounded in $l^{p}(\Z^{d})$ for all $p>\frac{d+1}{d-1}$.
\end{corollary}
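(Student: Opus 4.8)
The plan is to read the statement as a bound for the error maximal operator $f\mapsto\sup_j|E_{\lambda_j}f|$ over an arbitrary lacunary sequence $(\lambda_j)$ — exactly the part of the proof of Theorem~\ref{main theorem} that is separated off here — and to establish it by proving a single‑scale power saving $\|E_\lambda\|_{l^p\to l^p}\lesssim\lambda^{-\delta(p)}$ with $\delta(p)>0$ in the stated range and then summing a geometric series over the lacunary radii (which also yields uniform boundedness of the individual $E_\lambda$). Everything is soft once the right two single‑scale estimates are in hand.

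Two inputs are needed. First, Theorem~\ref{MSW decomposition} already provides the $l^2$ estimate \eqref{error decay lacunary}, namely $\|E_\lambda\|_{l^2\to l^2}\lesssim_{\varepsilon}\lambda^{\frac{3-d}{4}+\varepsilon}$, a genuine power gain for $d\geq 4$ and the only place the Kloosterman refinement is used. Second, I would record a trivial $l^1\to l^1$ bound $\|E_\lambda\|_{l^1\to l^1}\lesssim_{\varepsilon}\lambda^{1+\varepsilon}$: writing $E_\lambda=A_\lambda-M_\lambda$, the averaging part has $\|A_\lambda\|_{l^1\to l^1}=1$ since $\sigma_\lambda$ is a probability measure, and the $\ell^1$‑mass of the kernel of $M_\lambda$ is bounded by crude means — each coefficient $K(\lambda,q,l)$ is $O(1)$ (indeed $\lesssim q^{1-d/2}$ by completing the square in the underlying Gauss sums), for each fixed $q$ the cutoffs $\Psi(q\xi-l)$, $l\in\Z^d_q$, have pairwise disjoint supports in $\T^d$, the surviving factor $\widehat{d\sigma}_\lambda$ decays away from frequency $0$, and the kernel of each summand is essentially supported in a ball of radius $\lesssim\lambda^{1/2}$; summing over $q\leq\lambda^{1/2}$ costs only the stated power. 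No number theory beyond the elementary Gauss‑sum estimate enters.

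Riesz--Thorin interpolation between these endpoints gives, with $\theta=2/p'$ for $p\in(1,2)$,
\[
\|E_\lambda\|_{l^p\to l^p}\ \lesssim_{\varepsilon}\ \lambda^{\,(1-\theta)\,+\,\frac{3-d}{4}\theta\,+\,\varepsilon}\ =\ \lambda^{\,1\,-\,\frac{d+1}{4}\theta\,+\,\varepsilon},
\]
whose exponent is negative exactly when $\theta>\tfrac{4}{d+1}$, i.e.\ when $p>\tfrac{d+1}{d-1}$; for such $p$ and $\varepsilon$ small this yields $\|E_\lambda\|_{l^p\to l^p}\lesssim\lambda^{-\delta(p)}$ for some $\delta(p)>0$. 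Since $\lambda_j\geq 2^j\lambda_0$,
\[
\Big\|\sup_j|E_{\lambda_j}f|\Big\|_{l^p}\ \leq\ \sum_j\|E_{\lambda_j}f\|_{l^p}\ \leq\ \Big(\sum_j\|E_{\lambda_j}\|_{l^p\to l^p}\Big)\|f\|_{l^p}\ \lesssim\ \|f\|_{l^p},
\]
which is the claim for $p\in(\tfrac{d+1}{d-1},2]$; the range $p>2$ follows by duality ($E_\lambda^*$ is again a convolution operator with the same $l^1$ and $l^2$ operator norms), together with interpolation of the $l^2$ estimate against the trivial bound $\|E_\lambda\|_{l^\infty\to l^\infty}=\|E_\lambda\|_{l^1\to l^1}$ for the largest exponents. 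The only step with genuine content is the crude $l^1$ bound, and the single point to watch is that its $\lambda$‑exponent must be kept $\leq 1$ so that the interpolation threshold is exactly $\tfrac{d+1}{d-1}$ — which is why one cannot simply apply the triangle inequality across all $\sim\lambda^{(d+1)/2}$ terms of $M_\lambda$ but must exploit the per‑$q$ disjointness of the cutoffs and the decay of $\widehat{d\sigma}_\lambda$. Everything else is routine, which is the sense in which the corollary costs ``significantly less work'' than Theorem~\ref{main theorem}.
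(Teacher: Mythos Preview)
Your approach is essentially the same as the paper's: interpolate the Kloosterman--refined $l^2$ bound $\|E_\lambda\|_{l^2\to l^2}\lesssim\lambda^{(3-d)/4+\varepsilon}$ against an $l^1$ bound with exponent $\lambda^1$ (the paper writes this as $N^2$ with $N^2=\Lambda$ and calls it simply ``the trivial $l^1$ bound''), then sum the resulting power saving over the lacunary radii to control $\sup_j|E_{\lambda_j}|$. The paper carries this out in two lines without justifying the $l^1$ endpoint, whereas you add a sketch of how to bound the $\ell^1$--mass of the kernel of $M_\lambda$ and note the $p>2$ range via duality; but the structure, the two inputs, and the resulting threshold $p>\tfrac{d+1}{d-1}$ are identical.
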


We remark that the outline of our argument is also similar to the ones used in $l^p$-improving, such as \cite{Hughes} and \cite{KL}.  Finally, we comment briefly about the difficulty of extending this work to degree $k$ in the next section; for related work that discusses the difficulty of these improvements in relationship to unsolved problems in analytic number theory, see \cite{TCA}.   

\subsection{Acknowledgements}
Thank you to the anonymous referee(s) for pointing out an error from an incorrect normalization in the original manuscript, and also thank you to Kevin Hughes for helpful comments. The proof is now restructured here, but the main result is the same.

\section{Proof of Theorem \ref{main theorem}}

We start by explaining several facts that we will use later on that rely on the Kloosterman refinement.

\begin{lemma}
\label{Kloosterman degree 2}
    We have that
    \begin{equation}
    \sum_{q \leq \lambda^{1/2}}|K(\lambda, q, l)| \lesssim_{d,\varepsilon} \lambda^{\frac{3-d}{4}+\varepsilon}.
\end{equation}
\end{lemma}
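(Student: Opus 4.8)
The plan is to evaluate the complete exponential sum $K(\lambda,q,l)$ explicitly enough to expose square-root cancellation in the unit variable $a$, which is precisely the Kloosterman refinement. First I would record the multiplicativity of $K$ in $q$: factoring $q=q_1q_2$ with $\gcd(q_1,q_2)=1$ and applying the Chinese Remainder Theorem to both the $x$-sum over $\Z^d_q$ and the $a$-sum over $U_q$ splits $K(\lambda,q,l)$ into a product of the analogous sums modulo $q_1$ and $q_2$ (with $\lambda$ and $l$ read off in each factor). This reduces every estimate to prime powers $q=p^k$ and isolates the even modulus $p=2$, where the Gauss sums behave differently, for separate treatment.

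At an odd prime power the inner sum factors across coordinates into one-dimensional Gauss sums, and completing the square in each gives
$$\sum_{x\in\Z^d_q}e\!\left(\frac{a|x|^2+lx}{q}\right)=\prod_{j=1}^d\sum_{y\in\Z_q}e\!\left(\frac{ay^2+l_jy}{q}\right)=\left(\frac{a}{q}\right)^{\!d}\epsilon_q^{\,d}\,q^{d/2}\,e\!\left(\frac{-\overline{4a}\,|l|^2}{q}\right),$$
where $\epsilon_q\in\{1,i\}$ and $\overline{4a}$ is the inverse of $4a$ modulo $q$. Substituting back collapses the $a$-sum onto
$$K(\lambda,q,l)=\frac{\epsilon_q^{\,d}}{q^{d/2}}\sum_{a\in U_q}\left(\frac{a}{q}\right)^{\!d}e\!\left(\frac{-\lambda a-\overline{4}\,|l|^2\,\bar a}{q}\right),$$
with $\bar a$ the inverse of $a$ modulo $q$. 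For $d$ even the Jacobi factor disappears and the $a$-sum is the Kloosterman sum $S(-\lambda,-\overline{4}\,|l|^2;q)$; for $d$ odd it is a Salié sum.

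The crucial input is then square-root cancellation: Weil's bound for the Kloosterman sum (even $d$) and the explicit evaluation of the Salié sum (odd $d$) both yield, uniformly in $l$,
$$|K(\lambda,q,l)|\lesssim_{\varepsilon}q^{\frac{1-d}{2}+\varepsilon}\,\gcd(\lambda,|l|^2,q)^{1/2},$$
which improves on the trivial bound $q^{1-d/2}$ obtained by summing over $a$ with no cancellation. The $p=2$ contributions coming from the multiplicativity step are absorbed into this estimate, since the $2$-adic Gauss factors are bounded.

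Finally I would sum this pointwise estimate over $q\le\lambda^{1/2}$, organizing the range dyadically and grouping by the value $g=\gcd(\lambda,|l|^2,q)$, so that the weighted divisor sum collapses onto the configuration dictated by the cutoff $q\le\lambda^{1/2}$; tracking the powers of $\lambda$ there should produce the stated bound $\lambda^{\frac{3-d}{4}+\varepsilon}$. I expect the main obstacle to be exactly this last step: one must combine the full strength of the Kloosterman/Salié cancellation with the truncation at $\lambda^{1/2}$ to pin down the exponent $\frac{3-d}{4}$, since a naive termwise summation of the pointwise estimate only returns the coarser bound, and one must check that the small moduli are genuinely controlled by the arithmetic of $\gcd(\lambda,|l|^2,q)$ rather than dominating the sum.
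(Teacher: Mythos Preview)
Your approach is essentially the same as the paper's, only far more expanded. The paper's proof consists of two citations from Magyar: a pointwise bound $\sup_l|K(\lambda,q,l)|\lesssim_\varepsilon q^{\frac{1-d}{2}+\varepsilon}\rho(q,\lambda)^{1/2}$ (where $\rho(q,\lambda)=(q_1,\lambda)2^r$ for $q=q_1 2^r$ with $q_1$ odd), and a weighted summation estimate $\sum_{q\le\lambda^{1/2}}q^{\beta}\rho(q,\lambda)^{1/2}\lesssim_{\beta,\varepsilon}\lambda^{\frac{\beta+1}{2}+\varepsilon}$, combined with $\beta=\frac{1-d}{2}+\varepsilon$. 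What you have written out---multiplicativity in $q$, evaluation of the Gauss sums by completing the square, reduction of the $a$-sum to a Kloosterman or Sali\'e sum, and the Weil/Sali\'e square-root bound---is precisely the derivation underlying Magyar's first estimate; the paper simply quotes the result rather than reproving it.

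Two remarks on where your write-up diverges slightly. First, your arithmetic weight $\gcd(\lambda,|l|^2,q)^{1/2}$ depends on $l$, whereas the paper (via Magyar) works with the $l$-independent quantity $\rho(q,\lambda)$, which also isolates the $2$-adic part explicitly; since $\gcd(\lambda,|l|^2,q)\le\gcd(\lambda,q)\le\rho(q,\lambda)$ up to the dyadic factor, your bound is at least as strong, but for the applications later in the paper one needs the bound uniform in $l$, so you should pass to $\gcd(\lambda,q)$ or $\rho(q,\lambda)$ anyway. Second, the step you flag as the ``main obstacle''---summing the pointwise bound over $q\le\lambda^{1/2}$---is exactly the content of Magyar's second estimate, which the paper invokes as a black box; your proposed dyadic/gcd-grouping is the natural route to that estimate, so there is no hidden difficulty beyond carrying out that elementary divisor-sum computation. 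Your hand-wave at $p=2$ (``the $2$-adic Gauss factors are bounded'') is a little loose---one still needs square-root cancellation in the $a$-sum there, not merely boundedness of the Gauss factor---but this is again covered by the Magyar bound the paper cites.
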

\begin{proof}
Define $\rho(q,\lambda) = (q_1,\lambda)2^r$ where $q = q_12^r$ with $q_1$ odd.  From equation (1.13) in \cite{Magyar} we have that $\sup_{l}|K(\lambda,q,l)|\lesssim_{\epsilon} q^{-\frac{d}{2}+\frac{1}{2}+\epsilon}\rho(q, \lambda)^{1/2}.$ (see also \cite{KL}).  We also have the estimate 
\[
    \sum_{q \leq \lambda^{1/2}}q^{\beta}\rho(q,\lambda)^{1/2} \lesssim_{\beta,\varepsilon} \lambda^{\frac{\beta+1}{2}+\varepsilon}
\]
for any $\beta \in \R$ and all $\varepsilon > 0$ from equation (1.14) in \cite{Magyar}.  Combining these two with $\beta = \frac{1-d}{2}+\varepsilon$ we get the desired estimate.    
\end{proof}

We briefly explain how to get \eqref{error decay lacunary} from the estimate (1.9) in \cite{Magyar}, which asserts that (with our normalization) $\|\widehat{E_\lambda}\|_{L^{\infty}(\T^d)} \lesssim \lambda^{\frac{3-d}{4}+\varepsilon}$.  We have 
\begin{equation*}
\|E_\lambda f\|_{l^2(\Z^d)}
 = \|\widehat{E_\lambda f}\|_{L^2(\T^d)} = \|\widehat{E_\lambda}\widehat{f}\|_{L^2(\T^d)}
\leq \|\widehat{E_\lambda}\|_{L^\infty(\T^d)}\|f\|_{l^2(\Z^d)}
\end{equation*}
\begin{equation}
\label{error bound}
\lesssim \lambda^{\frac{3-d}{4}+\varepsilon}\|f\|_{l^2(\Z^d)}
\end{equation}
and the result follows from this. 
See also Lemma 2.9 in \cite{KL}. In fact using \eqref{error bound} we have
\begin{align*}
  \|\sup_{\Lambda \leq \lambda_j < 2\Lambda}E_\lambda f\|_{l^2(\Z^d)} &\leq \sum_{\Lambda\leq \lambda_j\leq 2\Lambda } \| E_{\lambda_j}f\|_{l^2{\Z^d}}\\
  &\leq \sum_{\Lambda\leq \lambda_j\leq 2\Lambda } \lambda^{\frac{3-d}{4}+\epsilon}_j\| f\|_{l^2{\Z^d}}\\
  &\leq \Lambda^{\frac{3-d}{4}+2\epsilon}\|f\|_{l^{2}(\Z^d)}\sum_{j}\lambda^{-\epsilon}_{j}
  &\lesssim_{\epsilon} \Lambda^{\frac{3-d}{4}+2\epsilon}\|f\|_{l^{2}(\Z^d)}.
\end{align*}
Note that we now have enough information to show Corollary \ref{main corollary}: let $N^2 = \Lambda$ and simply interpolate the above estimate with the trivial $l^1$ bound to get
\[
  \|\sup_{\Lambda \leq \lambda < 2\Lambda}E_\lambda f\|_{l^p(\Z^d)}\lesssim N^{(\frac{3-d}{2}+\varepsilon)(2-2/p)+2(2/p-1)}\|f\|_{l^p(\Z^d)}
\]
which leads to the bound for $\sup_j|E_{\lambda_j}|$ for all $p > \frac{d+1}{d-1}$.
\begin{remark}
Note that this error term comes from the Kloosterman refinement method, which allows one to simultaneously get a better error term that in \cite{MSW} and also to include the case $d=4$ (see also \cite{Hughes_lacunary}).

It is important to highlight the fact that the sums $K(\lambda, q ,l)$ are Kloosterman sums in degree 2, and although by a change of variable we can still get Kloosterman-like behavior in higher degrees, taking advantage of this seems to be a very difficult question in analytic number theory.  Specifically, in degree 2 one can complete the square to take advantage of the extra oscillation present in the character $e(a\lambda/q)$, which does not work in higher degrees.  Hence we focus on the degree 2 case.
\end{remark}

We also have the standard stationary phase estimate for the Fourier transform of the continuous surface measure:
\begin{equation}
\label{lemma: stationary phase estimate}
    |\widehat{d\sigma}_\lambda(\xi)|\lesssim |\lambda^{1/2}\xi|^{-(\frac{d-1}{2})}.
\end{equation}

Recalling the decomposition of Magyar from the Introduction, we label for future use
\begin{equation}
\label{main term}
\widehat{M_\lambda}(\xi) = \sum_{q \leq \lambda^{1/2}}\sum_{l\in \Z_q^d}K(q,\lambda,l)\Psi(q\xi - l)\widehat{d\sigma}_\lambda(\xi - \frac{l}{q}) = \sum_{q \leq \lambda^{1/2}}M_\lambda^q
\end{equation}

Following \cite{KLM} we will show the following two estimates for any $f = \chi_F$ and for natural number $\alpha$:
\begin{equation}
\label{l1 bound}
    \|M_1\|_{l^{1+\varepsilon}} \leq \alpha^2 \|f\|_{l^{1+\varepsilon}}
\end{equation}
and
\begin{equation}
\label{l2 bound}
    \|M_2\|_{l^{2}} \leq \alpha^{\varepsilon -\frac{d}{2}+\frac{3}{2}} \|f\|_{l^{2}}
\end{equation}

for operators $M_1$ and $M_2$ (to be defined later) such that $A_\tau f \leq M_1f+M_2f$, where $\tau: \Z^d \to \lambda_j$ ($\lambda_j$ being our lacunary sequence) is a stopping time.  Optimizing over $\alpha$ shows Theorem \ref{main theorem}; this is done at the end of our paper, also see \cite{KLM} for details.
We now decompose $A_\tau$ into pieces that will either become part of $M_1$ or $M_2$, showing they satisfy the appropriate bounds \eqref{l1 bound} or \eqref{l2 bound}.

Our first contribution to $M_1$ is $M_{1,1} = \chi_{\lambda^{1/2} \leq \alpha}A_\tau f$.  This satisfies \eqref{l1 bound} easily.  Next we look at the error term: our first contribution to $M_2$ is $M_{2,1} = |E_\tau f|$, which satisfies \eqref{l2 bound} due to \eqref{error bound}

Now we turn to the main term.  This will contribute to both $M_1$ and $M_2$.  First we have that
\begin{equation}
\label{main term large q}
    \|\sum_{\alpha<q \leq \lambda^{1/2}}M_\lambda^q\|_{l^2 \to l^2} \lesssim \alpha^{\varepsilon - \frac{d-3}{2}}\lambda^\varepsilon
\end{equation}
in a similar way as the proof of \eqref{Kloosterman degree 2} (see also Section 4.1 in \cite{Hughes}).  Hence our second contribution to $M_2$ is 
\[
M_{2,2} = \sum_{\alpha < q}|M_\tau^q|
\]
which clearly satisfies \eqref{l2 bound}.
All that is left is
\[
\sum_{1 \leq q \leq \alpha}M_\lambda^q f
\]
which will contribute to both $M_1$ and $M_2$.  Decompose $M_\tau ^q = M_{\tau,1}^1+ M_{\tau, 2}^q$ such that 
\[
\hat{M_{\lambda,1}^q}(\xi) = \sum_{l\in \Z_q^d}K(q,\lambda,l)\Psi_{\lambda^{1/2}/\alpha}(\xi - \frac{l}{q})\widehat{d\sigma}_\lambda(\xi - \frac{l}{q})
\]
where $\Psi_B(\xi) = \Psi(B\xi)$, is the low frequency piece and $M_{\lambda,2}^q = M_\lambda^q - M_{\lambda,1}^q$ is the high frequency piece.
The final contribution to $M_2$ is 
\[
M_{2,3} = |\sum_{1 \leq q \leq \alpha}M_{\tau,2}^q|.
\]
To show \eqref{l2 bound} we first note that by construction and the definition of $\Psi$,  $\Psi_q(\xi) - \Psi_{q\lambda^{1/2}/\alpha}(\xi) = 0$ if $|\xi| < \alpha/8q\lambda^{1/2}$.  Indeed, we have
\[
\|M_{2,3}\|_{l^2}^2 =  \sum_{\lambda^{1/2}_j>\alpha}\|\sum_{1 \leq q \leq \alpha}M_{\tau,2}^q\|_{l^2}^2
\]
\[
\leq \alpha\sum_{\lambda^{1/2}_j >\alpha}\int_{\T}\sum_{1 \leq q \leq \alpha}|\sum_{l\in \Z_q^d}K(q,\lambda_j,l)[\Psi_q(\xi-\frac{l}{q})-\Psi_{q\lambda_j^{1/2}/\alpha}(\xi - \frac{l}{q})]\widehat{d\sigma}_{\lambda_j}(\xi - \frac{l}{q})|^{2}
\]
by Cauchy-Schwartz and the trivial bound on the second factor.  Recalling the Kloosterman bound from the proof of Lemma \ref{Kloosterman degree 2}, the lower bound $|\xi-\frac{l}{q}| \geq \frac{\alpha}{8q\lambda^{1/2}}$, and the stationary phase estimate \eqref{lemma: stationary phase estimate} we get
\begin{align*}
&\lesssim\alpha\sum_{1\leq q \leq \alpha}\sum_{\lambda^{1/2}_{j}>\alpha}(q^{-\frac{d}{2}+\frac{1}{2}+\epsilon}\rho(q, \lambda_j)^{1/2}\alpha^{-\frac{d-1}{2}}q^{\frac{d-1}{2}})^2\\
&\leq\alpha\sum_{1\leq q \leq \alpha}\sum_{\lambda^{1/2}_{j}>\alpha}(q^{-\frac{d}{2}+\frac{1}{2}+\epsilon}\rho(q, \lambda_j)^{1/2}\alpha^{-\frac{d-1}{4}-10\epsilon}q^{\frac{d-1}{4}-10\epsilon})^2\\
&= \alpha^{\frac{3-d}{2}-20\epsilon}\sum_{1 \leq q <\alpha}\sum_{\lambda^{1/2}_j>\alpha}q^{-\frac{d}{2}+\frac{1}{2}-18\epsilon}\rho(q, \lambda_j)\\
&\leq \alpha^{\frac{3-d}{2}-20\epsilon}\sum_{\lambda^{1/2}_j>\alpha}\sum_{1 \leq q <\lambda^{1/2}_{j}}q^{-\frac{d}{2}+1-18\epsilon}\rho(q, \lambda_j)^{1/2}\\
&\leq \alpha^{\frac{3-d}{2}-20\epsilon}\sum_{\lambda_{j}>\alpha^2} \lambda_{j}^{\frac{-\frac{d}{2}+2-18\epsilon}{2}+\epsilon}\\
&\lesssim_{\epsilon} \alpha^{\frac{3-d}{2}-20\epsilon}.
\end{align*}
which satisfies \eqref{l2 bound}.  Note that we have used the fact that at most one term in the sum in $l$ is nonzero for each fixed $\xi$ and the bound in the proof of the Kloosterman estimate Lemma \ref{Kloosterman degree 2}.  Here it is important that we are dealing with a lacunary sequence.

The final piece is the last contribution to $M_1$, that is the low frequency piece
\[
M_{1,2} = |\sum_{1 \leq q \leq \alpha}M_{\tau,1}^q|.
\]
We will now comment on how to show \eqref{l1 bound} for $M_{1,2}$.  One can simply follow the argument detailed in \cite{KLM}.  Indeed, upon examination, this argument only relies on a specific kernel decomposition (equation 3.11), a bound on Ramanujan sums (Lemma 3.13), and a certain $l^p$ bound stemming from the kernel decomposition (Proposition 3.15).  Equation 3.11 is detailed in \cite{Hughes_lacunary} and is valid in dimension 4, Lemma 3.13 is independent of dimension, and Proposition 3.15 only relies on the Hardy-Littlewood asymptotic, which is valid in dimension 4 for the radii that we consider, that is $\lambda \in \N\setminus 4\N$.

Let $f=1_{F}$, using  \eqref{l1 bound} and \eqref{l2 bound} we obtain that 
\begin{equation*}
    \sup_{\beta>0}\beta|\{x; M_{lac}f(x)>\beta\}|^{\frac{d-1}{d+1}}\leq 2\|f\|_{\frac{d+1}{d-1}}.
\end{equation*}
Indeed, fix $\beta>0$, we can choose $\alpha=\beta^{-\frac{1}{d-1}}$ to obtain (up to an $\varepsilon$ loss)
\begin{align*}
    \beta|\{x; M_{lac}f(x)>\beta\}|^{\frac{d-1}{d+1}}&\leq  \beta|\{x; M_1f(x)>\beta\}|^{\frac{d-1}{d+1}}+
    \beta|\{x; M_2f(x)>\beta\}|^{\frac{d-1}{d+1}}\\
    &\leq \beta\left(\frac{\|M_{1}f\|_1}{\beta}\right)^{\frac{d-1}{d+1}}+\beta\left(\frac{\|M_2f\|_2}{\beta}\right)^{2\left(\frac{d-1}{d+1}\right)}\\
    &\leq \beta^{\frac{2}{d+1}}(\alpha^{2}|F|)^{\frac{d-1}{d+1}}+\beta^{-\frac{(d-3)}{d+1}}\alpha^{-\frac{(d-3)(d-1)}{d+1}}|F|^{\frac{d-1}{d+1}}\\
    &=2|F|^{\frac{d-1}{d+1}}\\
    &=2\|f\|_{\frac{d+1}{d-1}}.
\end{align*}
and this gives Theorem \ref{main theorem} for all $p> \frac{d+1}{d-1}$.

\bibliographystyle{amsplain}


\end{document}